\newtheorem{theorem}{Theorem}[section]
\newtheorem{lemma}{Lemma}[section]
\newtheorem{remark}{Remark}[section]
\numberwithin{equation}{section}
\numberwithin{theorem}{section}
\numberwithin{proposition}{section}
\numberwithin{lemma}{section}
\numberwithin{remark}{section}
\newcommand{\gm}{\gamma}
\newcommand{\dl}{\delta}
\newcommand{\lm}{\lambda}
\newcommand{\eps}{\epsilon}
\newcommand{\z}{\zeta}
\newcommand{\dist}{\operatorname{dist}}
\newcommand{\pl}{\partial}
\newcommand{\intl}{\int\limits}
\def\Xint#1{\mathchoice
    {\XXint\displaystyle\textstyle{#1}}%
    {\XXint\textstyle\scriptstyle{#1}}%
    {\XXint\scriptstyle\scriptscriptstyle{#1}}%
    {\XXint\scriptscriptstyle\scriptscriptstyle{#1}}%
    \!\int}
\def\XXint#1#2#3{\setbox0=\hbox{$#1{#2#3}{\int}$}
    \vcenter{\hbox{$#2#3$}}\kern-0.5\wd0}
\def\bint{\Xint-}
\def\dashint{\Xint{\raise4pt\hbox to7pt{\hrulefill}}}
\def\dashiint{\bint\kern-0.15cm\bint}
\newcommand{\ovl}[3]{\int_{#1}^{#2}\kern-#3pt\raise4pt\hbox to7pt{\hrulefill}\ }
\newcommand{\ovll}[3]{\intl_{#1}^{#2}\kern-#3pt\raise4pt\hbox to7pt{\hrulefill}\ }
\newcommand{\tvl}[2]{\iint_{#1}\kern-#2pt\raise4pt\hbox to7pt{\hrulefill}\ }
\newcommand{\bye}{
\newcommand{\ine}{\int_{E}}
\newcommand{\iinet}{\iint_{E_T}}
\newcommand{\lpf}{\|f\|_{p,E}}
\newcommand{\lqf}{\|f\|_{q,E}}
\newcommand{\ovtau}{\bar{\tau}}
\newcommand{\ovep}{\bar{\eps}}
\newcommand{\ovdl}{\bar{\dl}}
\newcommand{\ovc}{\bar{c}}
\newcommand{\ovg}{\bar{\gm}}
\newcommand{\bry}{B_{\rho}(y)}
\newcommand{\kry}{K_{\rho}(y)}
\newcommand{\ple}{\pl E}
\newcommand{\ove}{\bar{E}}
\newcommand{\trpo}{\Big(\frac{c}{u\pto}\Big)^{p-2}}
\newcommand{\trmo}{\Big(\frac{c}{u\pto}\Big)^{m-1}}
\newcommand{\tpso}{\Big(\frac{u\pto}{c}\Big)^{2-p}}
\newcommand{\datap}{\{p,N,C_o,C_1\}}
\newcommand{\datam}{\{m,N,C_o,C_1\}}
\newcommand{\pto}{(x_o,t_o)}
\def\po{\left(x_o,t_o\right)}
\def\bx{\bar x}
\def\bt{\bar t}
\def\ox{\bar x}
\def\bxt{(\bx,\bt)}
\newcommand{\ukjm}{(u-k_j)_{-}}
\newcommand{\uhm}{(u-h)_{-}}
\newcommand{\umm}{(u-M)_{-}}
\newcommand{\ump}{(u-(\mu_+-M))_+}
\newcommand{\ukpm}{(u-k)_{\pm}}
\newcommand{\ukp}{(u-k)_+}
\newcommand{\uknp}{(u-k_n)_+}
\newcommand{\uknpu}{(u-k_{n+1})_+}
\newcommand{\uknm}{(u-k_n)_-}
\newcommand{\ukm}{(u-k)_-}
\newcommand{\wkp}{(w-k)_+}
\newcommand{\wknp}{(w-k_n)_+}
\newcommand{\wknpu}{(w-k_{n+1})_+}
\newcommand{\wkm}{(w-k)_-}
\newcommand{\vkjm}{(v-k_j)_-}
\newcommand{\rscc}{\frac{e^{\frac{\tau}{p-2}}}{M}
(\dl\rho^p)^{\frac1{p-2}} } 
\newcommand{\ptb}{(\bar{x},\bar{t})}
\newcommand{\vkp}{(v-k)_+}
\newcommand{\vklm}{(v-\bar{\lm}k)_-}
\newcommand{\psiko}{\Psi(H_k^+,\ukp,c)}

\newcommand{\qrtpm}{Q_{\rho}^{\pm}(\theta)}
\newcommand{\qrtp}{Q_{\rho}^+(\theta)}
\newcommand{\qrtm}{Q_{\rho}^-(\theta)}
\newcommand{\qrttm}{Q_{2\rho}^-(\theta)}
\newcommand{\qrttp}{Q_{2\rho}^+(\theta)}
\newcommand{\tkn}{\tilde{K}_n}
\newcommand{\trn}{\tilde{\rho}_n}
\newcommand{\trsin}{\left(\frac{e^{\tau}}
{M^{2-p}\dl_1\rho^p}\right)^{\frac1{2-p}}}
\newcommand{\trspo}{\left(\frac{e^{\tau}}
{M^{2-p}\dl_1\rho^p}\right)^{\frac{p-1}{2-p}}}
\newcommand{\ppt}{\frac{\pl}{\pl\tau}}
\newcommand{\kwp}{(k-w)_+}
\newcommand{\kwpo}{[k-(k-w)_++\epsilon k]^{p-1}}
\newcommand{\ikfo}{\int_{K_{8}} }
\newcommand{\enw}{(\eps^n-w)_+ }
\newcommand{\efnp}{\frac{\eps^{n(2-p)}}{[1+\eps-s]^{p-1}}}
\newcommand{\iepnw}{\ikfo\z^p\tau_*\chi_{[\enw>s\eps^n]}dz}
\newcommand{\lngep}{\frac{\gm}{\gm_o}\bigg(
\ln{\frac{1+\eps}{1+\eps-s}}\bigg)^{-p}}
\newcommand{\ipsin}[1]{\ikfo\z^p{#1}
\Psi_{\eps^n}\big[w(z,{#1})\big]dz}
\newcommand{\iphin}[1]{\ikfo\z^p{#1}
\Phi_{\eps^n}\big[w(z,{#1})\big]dz}
\newcommand{\ukno}{(u-k_{n+1})_+}
\newcommand{\tvls}[2]{\iint_{#1}\kern-#2pt\raise4pt\hbox to15pt{\hrulefill}\ }
\newcommand{\uqo}{\bint_{K_\rho} u^q(\cdot,0)dx} 
\newcommand{\uqonrm}{\Big(\uqo\Big)^{\frac1q}} 
\newcommand{\uqoo}{\bint_{K_\rho(x_o)}u^q(\cdot,t_o)dx} 
\newcommand{\uqoonrm}{\Big(\uqoo\Big)^{\frac1q}} 
\newcommand{\uroo}{\bint_{K_{4\rho}(x_o)} 
u^r(\cdot,t_o-\theta_o\rho)dx} 
\newcommand{\uroonrm}{\Big(\uroo\Big)^{\frac1r}} 
\newcommand{\uro}{\bint_{K_{4\rho}} u^r(\cdot,-\theta_o\rho)dx} 
\newcommand{\uronrm}{\Big(\uro\Big^{\frac1r}} 
\newcommand{\Phihn}[1]{\Phi_{h^n}[u(x,{#1})]}
\newcommand{\Psihn}[1]{\Psi_{h^n}[u(x,{#1})]}
\newtheorem{Definition}{Definition}[section]
\begin{document}
\title[Positive Eigenfunctions via Sub-Super Solutions Method]{Existence of Positive Eigenfunctions to an Anisotropic Elliptic Operator via Sub-Super Solutions Method}

\author[S. Ciani]{Simone Ciani}
\address{Dpto. di Matematica e Informatica ``U. Dini", \\
Universit\`a degli Studi di Firenze,\\ 
viale G. Morgagni 67/A, 50134 Firenze, Italy}
\email{\tt simone.ciani@unifi.it}

\author[G. M. Figueiredo]{Giovany M. Figueiredo}
\address{Dpto. de Matemática,\\
Universidade de Brasilia,\\
UNB, CEP: 70910-900, Brasília–DF, Brazil }
\email{\tt giovany@unb.br}

\author[A. Su\'arez]{Antonio Su\'arez}
\address{Dpto. EDAN and IMUS,\\
University of Sevilla,\\
Avda. Reina Mercedes, s/n, 41012,
Sevilla, Spain}
\email{\tt suarez@us.es}

\subjclass{35K65, 35B65, 35B45, 35K20.}

\keywords{Anisotropic $p$-Laplacian, Positive Solution, Sub-Supersolution, Eigenvalues.}

\date{\today}

\begin{abstract}
Using the sub-supersolution method we study the existence of positive solutions for the anisotropic problem
\begin{equation} \label{0.1}
-\sum_{i=1}^N\frac{\partial}{\partial x_i}\left( \left|\frac{\partial u}{\partial x_i}\right|^{p_i-2}\frac{\partial u}{\partial x_i}\right)=\lambda u^{q-1} 
\end{equation} 
where $\Omega$ is a bounded and regular domain of $\mathbb{R}^N$, $q>1$ and $\lambda>0$.
\end{abstract}

\maketitle


\section{Introduction}\label{S:intro}

In this paper the main goal is to show the existence of positive solutions of the problem 
\begin{equation}
\label{intro}
\left\{\begin{array}{ll}
-\displaystyle\sum_{i=1}^N\frac{\partial}{\partial x_i}\left( \left|\frac{\partial u}{\partial x_i}\right|^{p_i-2}\frac{\partial u}{\partial x_i}\right)=\lambda u^{q-1}  &\mbox{in $\Omega$,}\\
u=0 & \mbox{on $\partial\Omega$,}
\end{array}
\right.
\end{equation} 
where $\Omega\subset \mathbb{R}^N$, $N\geq 1$, is a bounded and regular domain, $p_i>1$, $i=1,\ldots, N$, $q>1$ and $\lambda$ is a real parameter. We will assume without loss of generality that the $ p_{i}$ are ordered increasingly, that is, $p_1<\ldots <p_N$.\newline
There is a vast literature concerning to anisotropic elliptic problems. We mention here only those references most strongly related to (\ref{intro}).
First, in \cite{FGK} it was proved that for $q<p_N$ for any $\gamma>0$ there exists $\lambda_\gamma>0$ and $u_\gamma$ with $\|u_\gamma\|_p=\gamma$ and $u_\gamma$ solution of (\ref{intro}) with $\lambda=\lambda_\gamma$. As the authors themselves claim, from this result it can not be deduced the existence of solutions of (\ref{intro}) for a given $\lambda$. In \cite{Monte}, using mainly variational methods, it was proved that if $p_1<q<p_N$ then there exist $0<\lambda_* \leq \lambda^*$ such that:
\begin{itemize}
\item If $\lambda\leq \lambda_*$, (\ref{intro}) does not posses positive solution.
\item If $\lambda> \lambda^*$, (\ref{intro}) possesses at least a positive solution.
\end{itemize}
Finally, for the general results of \cite{MPR} (Corollary 1) we can deduce that for the case $1<q<p_1$ there exist $0<\lambda_*<\lambda_{**}$ such that (\ref{intro}) possesses at least a solution for $\lambda\in (0,\lambda_*)\cup (\lambda_{**},\infty)$.\newline
In this paper we complete and improve the above results. For that, we use the sub-supersolution method, see \cite{ElHam}, \cite{giosusbsup} and \cite{Struwe}, (see also \cite{GST}, \cite{Giovany-Silva1}, \cite{Giovany-Silva2} and references therein for the application of this method to problems with nonlinear reaction function including singularities or critical exponent).\newline
This method allows us not only to prove the existence of a solution, but also gives us lower and upper bounds of such solution. Specifically, our main result is the following.
\begin{theorem} \quad
\label{main}
\begin{enumerate}
\item Assume that $1<q<p_1$. There exists a positive solution of (\ref{intro}) if and  only if $\lambda>0$.
\item Assume that $p_1\leq q < p_N$. There exists $\Lambda>0$ such that (\ref{intro}) does not posses positive solutions for $\lambda<\Lambda$ and (\ref{intro}) possesses at least one positive solution for $\lambda>\Lambda$. 
\end{enumerate}
\end{theorem}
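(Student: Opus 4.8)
The operator in (\ref{intro}), $\mathcal{A}u:=-\sum_{i=1}^N\partial_{x_i}(|\partial_{x_i}u|^{p_i-2}\partial_{x_i}u)$, is the gradient of the strictly convex functional $u\mapsto\sum_i\tfrac1{p_i}\int_\Omega|\partial_{x_i}u|^{p_i}$, so the plan is to run the sub--supersolution scheme of \cite{Struwe,ElHam,giosusbsup}, which transfers verbatim: an ordered pair $\underline u\le\overline u$ in $\overline\Omega$ with $\underline u\le 0\le\overline u$ on $\partial\Omega$, $\underline u$ a weak subsolution and $\overline u$ a weak supersolution of (\ref{intro}), yields a weak solution $u$ with $\underline u\le u\le\overline u$, and $u>0$ as soon as $\underline u>0$ in $\Omega$. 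The supersolution will be universal. Pick $L$ with $\Omega\subset\{0<x_N<L\}$; since $q<p_N$ in both cases, the one-dimensional Dirichlet problem $-(|g'|^{p_N-2}g')'=\lambda g^{q-1}$ on $(0,L)$ has for \emph{every} $\lambda>0$ a positive concave solution $g_\lambda$ (minimise $\tfrac1{p_N}\int_0^L|g'|^{p_N}-\tfrac\lambda q\int_0^L|g|^q$; coercive and with negative infimum because $q<p_N$, positive by the one-dimensional maximum principle, and $g_\lambda=\lambda^{1/(p_N-q)}g_1$). Then $\overline u_\lambda(x):=g_\lambda(x_N)$ has $\partial_{x_i}\overline u_\lambda\equiv 0$ for $i\ne N$, hence $\mathcal A\overline u_\lambda=-(|g_\lambda'|^{p_N-2}g_\lambda')'(x_N)=\lambda\,\overline u_\lambda^{\,q-1}$ in $\Omega$ and $\overline u_\lambda>0$ on $\partial\Omega$, so $K\overline u_\lambda$ is a supersolution for all $K\ge 1$ and all $\lambda>0$.

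For the subsolution I would take $\underline u=\varepsilon\psi$ with $\psi:=\rho^{\gamma}$, where $\rho$ is a smoothed positive interior version of $\operatorname{dist}(\cdot,\partial\Omega)$ agreeing with the distance near $\partial\Omega$, $\gamma>1$, $\varepsilon\in(0,1]$; then $\psi\in W_0^{1,\vec p}(\Omega)\cap L^\infty(\Omega)$, $\psi>0$ in $\Omega$, $\psi=0$ on $\partial\Omega$. The heart of the matter is the boundary behaviour of the one-directional pieces $\mathcal A_i\psi:=-\partial_{x_i}(|\partial_{x_i}\psi|^{p_i-2}\partial_{x_i}\psi)$, for which a direct computation gives
\[
\mathcal A_i\psi=-\gamma^{p_i-1}(\gamma-1)(p_i-1)\,|\partial_{x_i}\rho|^{p_i}\,\rho^{(\gamma-1)(p_i-1)-1}+O\!\bigl(\rho^{(\gamma-1)(p_i-1)}\bigr)\quad\text{near }\partial\Omega.
\]
For part (1), where $q<p_1$, I would choose $\gamma\ge p_1/(p_1-q)$ (possible precisely because $q<p_1$): then $(\gamma-1)(p_i-1)-1\ge\gamma(q-1)$ for every $i$, so $(\mathcal A_i\psi)_+\le C\rho^{\gamma(q-1)}\le C'\psi^{q-1}$ near $\partial\Omega$ (the locus where $\partial_{x_i}\rho$ vanishes contributing only integrably, not pointwise, when $p_i<2$), whence $\mathcal A(\varepsilon\psi)=\sum_i\varepsilon^{p_i-1}\mathcal A_i\psi\le C'\bigl(\sum_i\varepsilon^{p_i-q}\bigr)\varepsilon^{q-1}\psi^{q-1}\le\lambda(\varepsilon\psi)^{q-1}$ there once $\varepsilon$ is small, since $p_i-q>0$ for all $i$; on the fixed compact set $\{\rho\ge\rho_0\}$ the same smallness works because the left side is $o(1)\,\varepsilon^{q-1}$ while $\psi^{q-1}$ is bounded below. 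Thus for every $\lambda>0$ some $\varepsilon\psi$ is a subsolution with $\varepsilon\psi\le K\overline u_\lambda$ for $K$ large (both vanish on $\partial\Omega$, $\psi$ at least as fast as the Lipschitz $\overline u_\lambda$, which is bounded below away from $\partial\Omega$), giving a positive solution for all $\lambda>0$; the converse is immediate, since testing (\ref{intro}) with a positive solution yields $\sum_i\int_\Omega|\partial_{x_i}u|^{p_i}=\lambda\int_\Omega u^{q}>0$. This proves (1).

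For part (2) I would instead fix $\gamma\in(1,p_N/(p_N-1))$, so that $(\gamma-1)(p_i-1)-1<0$ for all $i$ and hence $\mathcal A_i\psi\to-\infty$ near $\partial\Omega$ wherever $\partial_{x_i}\rho\ne 0$; as at every boundary point some $|\partial_{x_i}\rho|$ is bounded below, $\mathcal A\psi<0$ on a fixed neighbourhood of $\partial\Omega$, so $\mathcal A\psi\le\lambda\psi^{q-1}$ holds there for free, while on the complementary compact set it holds as soon as $\lambda\ge\lambda_0=\lambda_0(\Omega,\vec p,q,\gamma)$. Hence $\psi$ (with $\varepsilon=1$) is a subsolution for $\lambda\ge\lambda_0$, it lies below $K\overline u_\lambda$ for $K$ large, and we get existence for $\lambda\ge\lambda_0$. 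The solvability set $S:=\{\lambda>0:(\ref{intro})\text{ has a positive solution}\}$ is an up-set: if $\lambda\in S$ with solution $u$ and $\lambda'>\lambda$, then $u$ is a subsolution of (\ref{intro}) with parameter $\lambda'$ and $u\le K\overline u_{\lambda'}$ for $K$ large (using $u\in C^{1,\alpha}(\overline\Omega)$, so $u(x)\le C\operatorname{dist}(x,\partial\Omega)$), whence $\lambda'\in S$; setting $\Lambda:=\inf S$ (so $0\le\Lambda\le\lambda_0$), the statement follows provided $\Lambda>0$. For that, let $u>0$ solve (\ref{intro}), $M:=\|u\|_{L^\infty(\Omega)}$; then $\mathcal Au=\lambda u^{q-1}\le\lambda M^{q-1}$, so by comparison $u\le w$ where $\mathcal Aw=\lambda M^{q-1}$, $w|_{\partial\Omega}=0$, and comparing $w$ with the single-variable functions $h_j(x_j)$, $j\in\{1,N\}$, that solve $-(|h_j'|^{p_j-2}h_j')'=\lambda M^{q-1}$ on the respective slab widths — each a supersolution of the $w$-problem — gives $M\le\|w\|_\infty\le\min\{c_{p_1}(\lambda M^{q-1})^{1/(p_1-1)},\,c_{p_N}(\lambda M^{q-1})^{1/(p_N-1)}\}$ with explicit constants $c_{p_1},c_{p_N}$. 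If $q=p_1$ the first bound already forces $\lambda\ge c_{p_1}^{-(p_1-1)}$; if $p_1<q<p_N$, the first gives $M\ge A_1\lambda^{-1/(q-p_1)}$ and the second $M\le A_2\lambda^{1/(p_N-q)}$, incompatible for $\lambda$ small because $-1/(q-p_1)<0<1/(p_N-q)$. In all cases there is $\Lambda_1>0$ with no positive solution for $\lambda<\Lambda_1$, so $\Lambda\ge\Lambda_1>0$, which completes (2).

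The genuine difficulty is concentrated in the subsolution: proving the expansion of $\mathcal A_i(\rho^{\gamma})$ near $\partial\Omega$ with the asserted sign and size, in particular along the directions where $\partial_{x_i}\rho$ degenerates and with the merely integrable (not pointwise) control of $\mathcal A_i\psi$ there when $p_i<2$, and carrying out the boundary-rate comparisons $\underline u\le K\overline u_\lambda$ and $u(x)\le C\operatorname{dist}(x,\partial\Omega)$. A lighter, bookkeeping point is to have at one's disposal for $\mathcal A$ the standard toolkit — weak comparison principle, Harnack inequality, local boundedness, and $C^{1,\alpha}$-regularity up to $\partial\Omega$ — on which both the sub--supersolution scheme and the barrier arguments rely, citing the known theory of anisotropic elliptic operators.
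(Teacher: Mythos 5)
Your skeleton (ordered sub/supersolution pair, monotonicity of the solvability set via using a solution at a smaller $\lambda$ as a subsolution, quantitative nonexistence for small $\lambda$) is the same as the paper's, but two ingredients are genuinely different and do work: the one--directional supersolution $g_\lambda(x_N)$ is a legitimate simplification of the paper's product $M\prod_i v_i(x_i)$ of eigenfunctions on a circumscribed rectangle, and your barrier estimate $M\le c_{p_j}(\lambda M^{q-1})^{1/(p_j-1)}$ for $j=1$ and $j=N$ gives a self-contained proof that $\Lambda>0$ for the whole range $p_1\le q<p_N$, whereas the paper argues directly only for $q=p_1$ (via the directional Poincar\'e inequality \eqref{embedding}) and cites \cite{Monte} for $p_1<q<p_N$. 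Two of your worries are also unnecessary: since $\overline u_\lambda$ is bounded below on $\overline\Omega$, the orderings $\underline u\le K\overline u_\lambda$ and $u_\mu\le K\overline u_{\lambda'}$ need only $L^\infty$ bounds, not $C^{1,\alpha}$ regularity up to $\partial\Omega$ (which, as the paper points out, is not available for this orthotropic operator).

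The genuine gap is the one you flag but do not close: the subsolution $\psi=\rho^\gamma$. The weak subsolution inequality must hold against every $0\le\phi\in W^{1,\mathbf p}_0(\Omega)$, which forces $\sum_i\mathcal A_i\psi\le\lambda\psi^{q-1}$ almost everywhere; an ``integrable but not pointwise'' control of $(\mathcal A_i\psi)_+$ cannot suffice, because $\phi$ may be concentrated on the bad set. And the bad set is genuinely bad. One has $\mathcal A_i(\rho^\gamma)=-\gamma^{p_i-1}(\gamma-1)(p_i-1)|\partial_{x_i}\rho|^{p_i}\rho^{(\gamma-1)(p_i-1)-1}-\gamma^{p_i-1}(p_i-1)\rho^{(\gamma-1)(p_i-1)}|\partial_{x_i}\rho|^{p_i-2}\partial^2_{x_ix_i}\rho$, and for $p_i<2$ the second term is \emph{not} $O(\rho^{(\gamma-1)(p_i-1)})$: it blows up like $|\partial_{x_i}\rho|^{p_i-2}$ where $\partial_{x_i}\rho$ degenerates, with the sign of $-\partial^2_{x_ix_i}\rho$. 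For $\Omega=B_1$ and $\rho=1-|x|$ near the boundary, $\partial^2_{x_ix_i}\rho=-(|x|^2-x_i^2)/|x|^3<0$ near the equator $\{x_i=0\}$, so $\mathcal A_i(\rho^\gamma)\sim c\,\rho^{(\gamma-1)(p_i-1)}|x_i|^{p_i-2}\to+\infty$ there, while $\lambda\psi^{q-1}=\lambda\rho^{\gamma(q-1)}$ stays bounded and the helpful negative contributions from the other directions are only of size $\rho^{(\gamma-1)(p_j-1)-1}$. On the open set $\{|x_i|\le\rho^s\}$ with $s$ large the positive singular term dominates everything, so the pointwise (hence the weak) subsolution inequality fails on a set of positive measure; the same phenomenon defeats your claim in part (2) that $\mathcal A\psi<0$ on a full neighbourhood of $\partial\Omega$. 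This is precisely the difficulty the paper's construction is built to avoid: taking $\underline u=\epsilon\prod_iv_i^{\alpha_i}(x_i)$ on an inscribed rectangle separates variables, so each flux $|\partial_{x_i}\underline u|^{p_i-2}\partial_{x_i}\underline u$ is differentiated \emph{exactly} using the one-dimensional eigenvalue equation $(|v_i'|^{p_i-2}v_i')'=-\eta_iv_i^{p_i-1}$ together with the regularity \eqref{cosa}, and no uncontrolled degeneracy of a cross-derivative ever appears. To repair your route you would need to assume $p_i\ge2$ for all $i$, or restrict to domains where each $\partial_{x_i}\rho$ is nondegenerate where it matters (essentially rectangles), or simply switch to the separated-variables subsolution.
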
 \noindent
An outline of the paper is as follows: in Section 2 we recall some definitions and some properties of the eigenvalues and eigenfunctions of the classical $p$-Laplacian. Next in Section 3 we enunciate the sub-supersolution method. Then in Section 4 we construct sub and super-solutions by multiplication of powers of $p$-Laplacian eigenfunctions to be applied in the existence theorem.
\section{Preliminary Lemmas and Setting}  
Consider  $h (x,s): \Omega \times \mathbb{R} \rightarrow \mathbb{R}$  a Caratheodory function, i.e. measurable in $x$ and continuous in the second variable $s$. Consider the anisotropic problem
\begin{equation} 
\label{proto}
\left\{\begin{array}{ll}
-\displaystyle\sum_{i=1}^N\frac{\partial}{\partial x_i}\left( \left|\frac{\partial u}{\partial x_i}\right|^{p_i-2}\frac{\partial u}{\partial x_i}\right)=h(x,u(x))  &  \text{in  $\Omega$},\\
u=0  & \text{on $\partial \Omega.$}
\end{array}
\right.
\end{equation}
The natural framework to study (\ref{proto}) is the anisotropic Sobolev Space $W^{1,\bf{p}}_{0}(\Omega)$, that is, the closure of $C_{0}^{\infty}(\Omega)$ under the anisotropic norm
$$ 
\|u \|_{W^{1,\bf{p}}(\Omega)}:=  \sum_{i=1}^{N} \bigg{\|} \frac{\partial u}{\partial x_i} \bigg{\|}_{p_i} 
$$
where $\frac{\partial u}{\partial x_i}$ denotes the $i-$th weak partial derivative of $u$.\newline
Recall that if we denote 
\begin{equation} 
\label{picondition}
\sum_{i=1}^{N} \frac{1}{p_{i}} > 1,  \quad p_{i}>1  \quad \forall i=1,\ldots,N, \quad p^{*}:= \frac{N}{\sum \frac{1}{p_{i}}-1}, \quad p_{\infty}:=\max \{p^{*}, p_{N} \},
\end{equation} \noindent 
then for every $r \in [1, p_{\infty}]$ the embedding $$W^{1,\bf{p}}_{0}(\Omega) \subset L^{r}(\Omega)$$ is continuous, and it is compact if $r<p_{\infty}$.
More precisely, it holds the following directional Poincar\'e-type inequality for any $u\in C^1_c(\Omega)$ (see for instance \cite{FGK}) 
\begin{equation} 
\label{embedding}
||u||_{r} \leq \frac{d^i\, r}{2} \bigg| \bigg| \frac{\partial u}{\partial x_i} \bigg| \bigg|_{r}, \quad \forall r\geq 1, \quad d^i= \sup_{x,y \in \Omega} \langle x-y,e_i\rangle,
\end{equation}
denoting by $\{e_1,\ldots,e_N\}$ the canonical basis of $\mathbb{R}^N$. \newline
The theory of embeddings of this kind of anisotropic Sobolev spaces is vast and we refer to \cite{FGK} for directional Poincar\'e-type inequality and to \cite{Schm} for Sobolev and Morrey's embeddings of the whole $W^{1,{\bf{p}}}(\Omega)$ space, obtained with an important geometric condition on the domain $\Omega$, namely that it must be semi-rectangular. It is not a case that this semi-rectangular condition reflects in our construction of the solution: the existence of traces for this kind of functions is heavily depending on the geometry of the domain as shown in \cite{Schm}. Regularity theory for orthotropic operators as the one defined by equation \eqref{intro} is still a challenging open problem, see for example \cite{DiBe}. \newline We also recall the following definition.
\begin{Definition}
A function $u \in W^{1,\bf{p}}(\Omega)$ is defined to be a a sub-(super-) solution to the problem (\ref{proto}) if $u \le(\ge) \, 0$ in $\partial \Omega$ and $\forall \hspace{0,1 cm} 0 \le \phi \in W^{1,{\bf{p}}}_0(\Omega)$ it satisfies
\begin{equation} \label{subsolution}
\int_{\Omega} \bigg[ \displaystyle\sum_{i=1}^N\left|\frac{\partial u}{\partial x_i}\right|^{p_i-2}\frac{\partial u}{\partial x_i}\frac{\partial \phi}{\partial x_i}  - h(x,u(x)) \phi  \bigg] dx \leq (\ge) 0.
\end{equation}
Finally, a solution $u\in W^{1,\bf{p}}_{0}(\Omega)$ to (\ref{proto}) has to satisfy 
$$
\int_{\Omega} \bigg[ \displaystyle\sum_{i=1}^N\left|\frac{\partial u}{\partial x_i}\right|^{p_i-2}\frac{\partial u}{\partial x_i}\frac{\partial \phi}{\partial x_i}  - h(x,u(x)) \phi  \bigg] dx = 0\quad \forall \phi \in W^{1,\bf{p}}_0(\Omega).
$$
\end{Definition} \noindent
Now, we recall some well-known results concerning the eigenvalue problem for the $p$-Laplacian. Specifically, the problem
\begin{equation}
\label{eigen} 
\left\{\begin{array}{ll} 
- \Delta_{p} u= \lambda |u|^{p-2} u &\text{in $\Omega$,}\\
u=0 & \text{on $\partial \Omega$,} 
\end{array}
\right.
\end{equation}
where 
$$
\Delta_{p} u= \text{div} (|\nabla u|^{p-2}\nabla u)=\displaystyle\sum_{i=1}^N\frac{\partial}{\partial x_i}\left( \left|\frac{\partial u}{\partial x_i}\right|^{p-2}\frac{\partial u}{\partial x_i}\right)
$$ 
The following result is well-known:
\begin{lemma}
\label{autovalor}
The eigenvalue problem (\ref{eigen}) has a unique eigenvalue $\lambda=\lambda_1$ with the property of having a positive associated eigenfunction $\varphi_1\in W_0^{1,p}(\Omega)$, called principal eigenfunction. Moreover, $\lambda_1$ is simple, isolated and is defined by
$$
\lambda_1=\inf\left\{\int_\Omega|\nabla u|^p: u\in W^{1,p}_0(\Omega),\;\int_\Omega |u|^p dx=1\right\}.
$$
 Furthermore, $\varphi_1\in C^{1,\beta}(\overline\Omega)$ for some $\beta\in (0,1)$ and $\partial\varphi_1/\partial n<0$ on $\partial\Omega$, where $n$ is the outward unit normal on $\partial\Omega$. Finally, for $N=1$ we have that 
\begin{equation}
\label{cosa}
|\nabla \varphi_1|^{p-2}\nabla \varphi_1\in W^{1,2}(\Omega),
\end{equation}
and in fact
$$
-\Delta_p \varphi_1(x)=\lambda_1\varphi_1(x)\quad \mbox{a.e. $x\in \Omega$}.
$$
\end{lemma}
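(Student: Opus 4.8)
The plan is to obtain the principal eigenpair by the direct method of the calculus of variations and then to deduce the remaining assertions from the by-now classical regularity, maximum principle, and Picone-type convexity tools for the $p$-Laplacian. First I would define $\lambda_1$ by the stated Rayleigh quotient; it is strictly positive by the Poincaré inequality and finite. Taking a minimizing sequence $(u_n)$, it is bounded in $W^{1,p}_0(\Omega)$, so along a subsequence $u_n\rightharpoonup\varphi_1$ weakly in $W^{1,p}_0(\Omega)$ and strongly in $L^p(\Omega)$ by the compact Rellich--Kondrachov embedding. Weak lower semicontinuity of $u\mapsto\int_\Omega|\nabla u|^p$ together with the strong $L^p$ convergence shows that $\varphi_1$ is a minimizer with $\int_\Omega|\varphi_1|^p=1$; since $|\nabla|\varphi_1||=|\nabla\varphi_1|$ a.e., we may replace $\varphi_1$ by $|\varphi_1|\ge0$, and the Euler--Lagrange equation (Lagrange multiplier rule) shows that $\varphi_1$ is a nonnegative weak solution of (\ref{eigen}) with $\lambda=\lambda_1$.

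Next I would upgrade the regularity: a Moser/De Giorgi iteration gives $\varphi_1\in L^\infty(\Omega)$, and then the interior and up-to-the-boundary $C^{1,\beta}$ theory for quasilinear equations of $p$-Laplacian type (DiBenedetto, Tolksdorf, Lieberman), valid since $\Omega$ is a regular domain, yields $\varphi_1\in C^{1,\beta}(\overline\Omega)$ for some $\beta\in(0,1)$. Vázquez's strong maximum principle then forces $\varphi_1>0$ in $\Omega$, and Vázquez's boundary point lemma gives $\partial\varphi_1/\partial n<0$ on $\partial\Omega$. This establishes the existence of a principal eigenfunction with the claimed qualitative properties.

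For simplicity and for the fact that $\lambda_1$ is the only eigenvalue admitting a one-signed eigenfunction, I would invoke Picone's identity. If $u,v>0$ are two eigenfunctions associated with $\lambda_1$, one tests the equation for $u$ with $(v^p-u^p)/u^{p-1}$ and that for $v$ with $(u^p-v^p)/v^{p-1}$ (after a standard regularization making these admissible in $W^{1,p}_0(\Omega)$) and adds; Picone's inequality then forces $\nabla(u/v)\equiv0$, whence $u=cv$. The same identity shows any eigenfunction of an eigenvalue $\lambda\ne\lambda_1$ must change sign, so $\lambda_1$ is the least eigenvalue and the unique one with a positive eigenfunction. Isolation follows from Anane's argument: if $\lambda_n\downarrow\lambda_1$ with eigenfunctions $\psi_n$ normalized in $L^p$, then $\psi_n\to\varphi_1$ in $W^{1,p}_0(\Omega)$, so $|\{\psi_n<0\}|\to0$, contradicting a uniform lower bound $|\{\psi_n<0\}|\ge c>0$ obtained by applying the Sobolev inequality on the nodal domain $\{\psi_n<0\}$ and using the equation there.

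Finally, for $N=1$ with $\Omega=(a,b)$, the eigenvalue equation reads $-(|\varphi_1'|^{p-2}\varphi_1')'=\lambda_1\,\varphi_1$ in the distributional sense; since $\varphi_1\in C^1(\overline\Omega)\subset L^\infty(\Omega)$ the right-hand side is continuous, so $w:=|\varphi_1'|^{p-2}\varphi_1'$ has a continuous distributional derivative, hence $w\in C^1(\overline\Omega)\subset W^{1,2}(\Omega)$ because $\Omega$ is bounded, which is (\ref{cosa}), and the ODE then holds pointwise, in particular a.e. The main obstacle is the simplicity step: rendering the test functions $(v^p-u^p)/u^{p-1}$ admissible near $\partial\Omega$, where $u$ vanishes, is precisely where the $C^{1,\beta}$ regularity and the Hopf-type bound $\partial\varphi_1/\partial n<0$ are needed to control the ratio $u/v$ up to the boundary (alternatively one uses the Díaz--Saá inequality); the rest is a routine assembly of standard results, for which we refer to the literature.
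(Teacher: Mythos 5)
Your sketch is correct, but be aware that the paper does not actually prove Lemma \ref{autovalor}: it is treated as a compendium of classical facts, and the ``proof'' consists of the remark immediately following it, which cites Giusti, Lindqvist and Peral for the existence, variational characterization, simplicity, isolation and $C^{1,\beta}$/Hopf properties, Guedda--Veron for property (\ref{cosa}) in dimension one, and Cianchi--Maz'ya for its extension to convex domains in higher dimensions. What you have written is a faithful outline of the standard arguments behind those citations: direct method plus Rellich for existence of a nonnegative minimizer, Moser iteration and the DiBenedetto--Tolksdorf--Lieberman theory for $C^{1,\beta}(\overline\Omega)$, V\'azquez's strong maximum principle and boundary point lemma for positivity and $\partial\varphi_1/\partial n<0$, Picone (or D\'iaz--Sa\'a) for simplicity and for the fact that no other eigenvalue has a one-signed eigenfunction, and Anane's nodal-domain measure estimate for isolation; your correct flagging of the admissibility of the Picone test functions near $\partial\Omega$ is exactly the delicate point in the literature. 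Your one-dimensional derivation of (\ref{cosa}) directly from the ODE is a legitimate and arguably more elementary route than invoking \cite{gv}. Two minor points: in the isolation step, the passage from $\psi_n\to\varphi_1$ in $W^{1,p}_0(\Omega)$ to $|\{\psi_n<0\}|\to 0$ deserves a line (convergence in measure plus the fact that $\varphi_1>0$), and note that the right-hand side of the pointwise equation should read $\lambda_1\varphi_1^{p-1}$ to match (\ref{eigen}) --- the lemma itself writes $\lambda_1\varphi_1$, and your argument is unaffected since either right-hand side is continuous. In short, you supply a proof where the paper supplies references; nothing in your outline conflicts with what the authors rely on.
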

\begin{remark}
The existence of $\lambda_1$ and main properties of $\varphi_1$ are well-known, see \cite{Giusti}, \cite{Peter1}, \cite{Peral}. Property (\ref{cosa}) holds in $N=1$, see for instance \cite{gv}, and for $N\geq 2$ is some specific domains, for example for $\Omega$ convex, see \cite{Cia}.  
\end{remark}

\section{An existence sub-supersolution theorem}
We start by stating an important theorem, see \cite{ElHam} and \cite{giosusbsup}, that assures the existence of a solution between a sub and a supersolution.
\begin{theorem} 
\label{teoss}
Suppose that $h: \mathbb{R}\mapsto \mathbb{R}$ a continuous function and that there exist $\underline{u}, \overline{u} \in W^{1,\bf{p}}(\Omega)\cap L^\infty(\Omega)$ subsolution and supersolution of (\ref{proto}) such that $\underline{u}\leq \overline{u}$. Then there exists $u \in W^{1,\bf{p}}_{0}(\Omega)$ solution to (\ref{proto}) such that 
$$
\underline{u} \le u \le \overline{u}.
$$
\end{theorem}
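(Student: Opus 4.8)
The plan is to pass first to an auxiliary problem whose reaction term is globally bounded, to solve that one by the direct method of the calculus of variations, and then to use the order relation $\underline u\le\overline u$ together with the strict monotonicity of the scalar maps $t\mapsto|t|^{p_i-2}t$ to force the solution so obtained into the interval $[\underline u,\overline u]$, where it will automatically solve the original problem (\ref{proto}). \textbf{Truncation.} Since $\underline u,\overline u\in L^\infty(\Omega)$, I would set $T(x,s):=\max\{\underline u(x),\min\{s,\overline u(x)\}\}$ and replace $h$ by $\tilde h(x,s):=h(T(x,s))$; this is a Carathéodory function with $|\tilde h(x,s)|\le C$, where $C:=\sup|h|$ over the range of values between $\inf_\Omega\underline u$ and $\sup_\Omega\overline u$, and it coincides with $h(s)$ whenever $\underline u(x)\le s\le\overline u(x)$. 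Writing $\tilde H(x,s):=\int_0^s\tilde h(x,t)\,dt$, one has the global Lipschitz bound $|\tilde H(x,s)-\tilde H(x,t)|\le C|s-t|$.

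\textbf{A weak solution of the truncated problem.} On $X:=W^{1,\mathbf p}_0(\Omega)$, which is reflexive as a closed subspace of $\prod_{i}L^{p_i}(\Omega)$ through $u\mapsto(\partial u/\partial x_i)_i$, I would minimize the functional
\[
J(u):=\sum_{i=1}^N\frac1{p_i}\int_\Omega\Big|\frac{\partial u}{\partial x_i}\Big|^{p_i}\,dx-\int_\Omega\tilde H(x,u)\,dx .
\]
Its first term is convex and strongly continuous, hence weakly lower semicontinuous; its second term is weakly continuous because $v\mapsto\int_\Omega\tilde H(x,v)\,dx$ is Lipschitz on $L^1(\Omega)$ and the embedding $X\hookrightarrow L^1(\Omega)$ is compact. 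The directional Poincaré inequality (\ref{embedding}) with $r=1$ and Hölder's inequality give $\|u\|_1\le C'\sum_i\|\partial u/\partial x_i\|_{p_i}$, so that $J(u)\ge\sum_i p_i^{-1}\|\partial u/\partial x_i\|_{p_i}^{p_i}-CC'\sum_i\|\partial u/\partial x_i\|_{p_i}$, which is coercive on $X$ because every $p_i>1$. Hence $J$ attains its infimum at some $u\in X$; since $\tilde h$ is bounded and continuous, $J\in C^1(X)$, and $u$ is a weak solution of (\ref{proto}) with $h$ replaced by $\tilde h$.

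\textbf{Localization and conclusion.} Next I would test this weak formulation with $\phi:=(\underline u-u)^+$, which belongs to $X$ and is $\ge 0$ because $u=0$ and $\underline u\le 0$ on $\partial\Omega$ in the trace sense. Subtracting from it the subsolution inequality for $\underline u$ tested against the same $\phi$, and using that $T(x,u)=\underline u(x)$, hence $\tilde h(x,u)=h(\underline u)$, a.e.\ on $\{\underline u>u\}$, so that the reaction terms cancel there, one is left with
\[
\int_{\{\underline u>u\}}\sum_{i=1}^N\Big(\Big|\tfrac{\partial\underline u}{\partial x_i}\Big|^{p_i-2}\tfrac{\partial\underline u}{\partial x_i}-\Big|\tfrac{\partial u}{\partial x_i}\Big|^{p_i-2}\tfrac{\partial u}{\partial x_i}\Big)\Big(\tfrac{\partial\underline u}{\partial x_i}-\tfrac{\partial u}{\partial x_i}\Big)\,dx\le 0 .
\]
Each summand is nonnegative by strict monotonicity of $t\mapsto|t|^{p_i-2}t$, so $\nabla\phi=0$ a.e.; since $\phi\in X$, Poincaré forces $\phi\equiv 0$, i.e.\ $\underline u\le u$. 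Testing symmetrically with $(u-\overline u)^+$ against the supersolution inequality for $\overline u$ gives $u\le\overline u$. Then $\underline u\le u\le\overline u$ on $\Omega$, so $\tilde h(x,u)=h(u)$ a.e., the equation of the previous step becomes the weak formulation of (\ref{proto}), and $\|u\|_\infty\le\max\{\|\underline u\|_\infty,\|\overline u\|_\infty\}$.

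The routine part is the variational step: once the reflexivity of $W^{1,\mathbf p}_0(\Omega)$ and the compact embedding into $L^1(\Omega)$ recalled in Section 2 are at hand, coercivity and weak lower semicontinuity of $J$ are immediate. The point that needs care is the localization step --- verifying that $(\underline u-u)^+$ and $(u-\overline u)^+$ are admissible test functions in the anisotropic space $W^{1,\mathbf p}_0(\Omega)$ (its lattice property, and the vanishing of the trace) and that the truncation has been arranged so that the reaction terms cancel exactly on the coincidence sets; after that, the only ellipticity one needs --- the strict monotonicity of the one-dimensional maps $t\mapsto|t|^{p_i-2}t$, which is all the orthotropic operator provides --- closes the argument term by term.
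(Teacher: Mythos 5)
Your proof is correct, but it takes a genuinely different route from the paper: the paper's entire proof of Theorem \ref{teoss} is a one-line reduction to the abstract existence result of Alves--El Hamidi \cite{ElHam} (observing that boundedness of $\underline u,\overline u$ makes $h$ satisfy their hypothesis $(h_2)$), whereas you give a self-contained argument by truncating $h$ between $\underline u$ and $\overline u$, minimizing the associated functional on $W^{1,\mathbf p}_0(\Omega)$ by the direct method, and then squeezing the minimizer into $[\underline u,\overline u]$ by testing with $(\underline u-u)^+$ and $(u-\overline u)^+$ and invoking the strict monotonicity of $t\mapsto|t|^{p_i-2}t$ coordinatewise. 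The sign bookkeeping in your comparison step is right (on $\{\underline u>u\}$ the truncation gives $\tilde h(x,u)=h(\underline u)$, so the reaction terms cancel and only the nonnegative monotonicity terms survive), and the coercivity and weak lower semicontinuity claims are standard given the directional Poincar\'e inequality \eqref{embedding} and the compact embedding into $L^1(\Omega)$ recalled in Section 2. What your approach buys is transparency and independence from the external reference; what it costs is that you must justify two technical points that \cite{ElHam} packages away and that you correctly flag but do not prove: that $W^{1,\mathbf p}_0(\Omega)$ has the lattice property needed to ensure $(\underline u-u)^+$ and $(u-\overline u)^+$ are admissible test functions (the boundary inequality $\underline u\le 0$ on $\partial\Omega$ is only stated informally in the paper's Definition, and since the space is defined as a closure of $C_c^\infty(\Omega)$ this requires an approximation argument), and that the critical point of the truncated functional is attained in $X$ rather than merely in the larger space where the subsolution lives. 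Both are standard, so this is a legitimate alternative proof rather than a gap.
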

\begin{proof}
Since $\underline{u}$ and $\overline{u}$ belong to $L^\infty(\Omega)$, then $h$ verifies condition $(h_2)$ of \cite{ElHam}. This concludes the proof.
\end{proof}

\section{Construction of sub and super-solutions: proof of the main result}
In this section we prove Theorem \ref{main}. For that, we apply Theorem \ref{teoss} to (\ref{intro}). Mainly, we construct the sub and the supersolution. 

\subsection{Sub-solutions}
Let us consider a rectangular bounded domain $U \subseteq \Omega$ i.e. 
$$
U:= \prod_{i=1}^{N}U_i,\quad\mbox{where $U_{i}=(a_i,b_i)$,} \quad a_i,b_i \in \mathbb{R} \quad \forall \, i=1,..,N.
$$
Denote by  $v_i=v_i(x_i)$ a positive principal eigenfunction of  $-\Delta_{p_i}$ in $U_i$,\newline that is, 
\begin{equation}
\label{pi} 
\begin{cases} - \Delta_{p_{i}} v_i= \eta_{i} |v_i|^{p_{i}-2} v_i &\text{in $U_i$,}\\
v_i=0 &\text{on  $\partial U_i$.} 
\end{cases}
\end{equation} 
From Lemma \ref{autovalor}, recall that, if $n_i$ is the outward normal derivative to $\partial U_i$, we have
\begin{equation}
\label{normal}
\frac{\partial v_i}{\partial n_i}<0 \quad\mbox{on $\partial U_i$.}
\end{equation}
Let us consider the function
\begin{equation}
    \underline{u}(x)= \begin{cases}
     \epsilon \, \displaystyle\prod_{i=1}^{N}  v_i^{\alpha_i} (x_{i}) & x \in U, \\
     0 & x\in \Omega\setminus\overline{U},
    \end{cases}
\end{equation}
where $\alpha_i>0$, $i=1,\ldots,N$, and $\epsilon>0$ will be chosen later. 
\begin{remark}
We note that $\underline{u}(x)>0 $ in $\emptyset \ne U \subset \Omega$.
\end{remark} \noindent
As $v_i$ are bounded, it is clear that  $\underline{u} \in W^{1,{\bf{p}}} (\Omega)$ and that $\underline{u}_{|\partial \Omega} = 0 $. Hence, $\underline{u}$ is subsolution  of (\ref{intro}) provided that 
$$
\int_{\Omega} \sum_{i=1}^{N} \bigg| \frac{\partial \underline{u}}{\partial x_i} \bigg|^{p_{i}-2}  \frac{\partial \underline{u}}{\partial x_i} \, \frac{\partial \phi}{\partial x_i} \, dx \leq \lambda \int_{\Omega} \underline{u}^{q-1} \phi \, dx  \quad\forall \, \phi \in W^{1,{\bf{p}}}_{0} (\Omega), \quad \phi \ge 0.
$$ \noindent
Observe that
\begin{equation} 
\lambda \int_{\Omega} \underline{u}^{q-1} \phi \, dx = \lambda \epsilon^{q-1}  \int_{U} \prod_{i=1}^{N}v_{i}^{\alpha_{i} (q-1)} \phi \, dx.
\end{equation} \noindent
On the other hand, observe that
$$
 \frac{\partial \underline{u}}{\partial x_i} = \epsilon \,  \alpha_{i} \bigg( \prod_{j\ne i} v_j^{\alpha_j} \bigg) v_i^{\alpha-1}  \frac{\partial v_i}{\partial x_i}\quad\mbox{in $U_i$.}
$$ \noindent
Then, taking into account the positivity of $v_i,\, \forall \, i=1,..,N$,
\[ 
\int_{\Omega} \sum_{i=1}^{N} \bigg|  \frac{\partial \underline{u}}{\partial x_i} \bigg|^{p_{i}-2}  \frac{\partial \underline{u}}{\partial x_i} \, \frac{\partial \phi}{\partial x_i}  \, dx= \]
\[
\sum_{i=1}^{N} \int_{\prod_{j \ne i} U_{j} } \bigg{\{} \int_{U_i} \bigg[ \epsilon \alpha_{i} \, \bigg( \prod_{j\ne i} v_j^{\alpha_j} \bigg) v_i^{\alpha_i-1} \bigg]^{p_{i}-1} \bigg| \frac{\partial v_i}{\partial x_i}  \bigg|^{p_{i}-2} \frac{\partial v_i}{\partial x_i}  \, \frac{\partial \phi }{\partial x_i}  \, dx_{i} \bigg{\}} d\hat{x}^{i}
\]
with the obvious notation for $d\hat{x}^i$. Next, by using an integration by parts argument and the Fubini-Tonelli theorem, we get
\[
\int_{\Omega} \sum_{i=1}^{N} \bigg|\frac{\partial \underline{u}}{\partial x_i} \bigg|^{p_{i}-2} \frac{\partial \underline{u}}{\partial x_i} \, \frac{\partial \phi}{\partial x_i} \, dx= \]

\[
-\sum_{i=1}^{N} \int_{\prod_{j \ne i} U_j} \bigg( \epsilon \alpha_{i}  \prod_{j \ne i} v_j^{\alpha_j } \bigg)^{p_i -1}   \int_{U_i} \frac{\partial}{\partial x_i} \bigg( v_i^{(\alpha_i -1)(p_i -1)} \bigg|\frac{\partial v_i}{\partial x_i} \bigg|^{p_i -2} \frac{\partial v_i}{\partial x_i}  \bigg) \phi \, dx_i d\hat{x}^{i} \]

\[ + \sum_{i=1}^{N} \int_{\prod_{j \ne i} U_j} \bigg( \epsilon \alpha_{i}  \prod_{j \ne i} v_j^{\alpha_j } \bigg)^{p_i -1} \bigg{\{}  \int_{\partial U_i} v_i^{(\alpha_i -1)(p_i -1)} \bigg| \frac{\partial v_i}{\partial x_i} \bigg|^{p_{i}-2} \frac{\partial v_i}{\partial n_i} \phi \, dx_i \bigg{\}} d\hat{x}^{i}.
\]
The second term on the right can be discarded as $\frac{\partial v_i}{\partial {n_i}}<0 $ in $\partial U_i$, see (\ref{normal}). Considering that
$$
\frac{\partial}{\partial x_i} \bigg( v_i^{(\alpha_i -1)(p_i -1)} \bigg| \frac{\partial v_i}{\partial x_i} \bigg|^{p_i -2} \frac{\partial v_i}{\partial x_i}\bigg)=$$$$
(\alpha_i-1)(p_i-1) \bigg| \frac{\partial v_i}{\partial x_i} \bigg|^{p_i }+v_i^{(\alpha_i -1)(p_i -1)} \frac{\partial}{\partial x_i} \bigg( \bigg| \frac{\partial v_i}{\partial x_i} \bigg|^{p_i -2} \frac{\partial v_i}{\partial x_i} \bigg)
$$
and, from Lemma \ref{autovalor}, that
$$
\frac{\partial}{\partial x_i}\bigg( \bigg| \frac{\partial v_i}{\partial x_i} \bigg|^{p_i -2} \frac{\partial v_i}{\partial x_i} \bigg) =-\eta_i v_i^{p_i-1}\quad\mbox{in $U_i$,}
$$
our sub-solution condition becomes
\begin{equation} \label{CLAIM}
\begin{aligned}
 \sum_{i=1}^{N} \int_{\prod_{j \ne i} U_j}  &\bigg(\epsilon \alpha_i  \prod_{j \ne i} v_j^{\alpha_j}  \bigg)^{p_i -1} \int_{U_i} \bigg{\{} \bigg[ (1-\alpha_i)(p_i -1) v_i^{(\alpha_i -1)(p_i -1)-1} \bigg| \frac{\partial v_i}{\partial x_i} \bigg|^{p_{i}} +\\
& + v_{i}^{(\alpha_i -1)(p_i -1)} \eta_i v_i^{p_i -1} \bigg] - \lambda \epsilon^{q-1} \bigg( \prod_{k=1}^{N} v_k^{\alpha_k (q-1)} \bigg) \bigg{\}} \phi \, dx_i d\hat{x}^{i} \leq 0.
\end{aligned}
\end{equation} \noindent
Let us require a condition on the pointwise integrand
\[ \lambda \ge \]
\[
\sum_{i=1}^{N} \bigg(\epsilon \alpha_i  \prod_{j\ne i} v_j^{\alpha_j}  \bigg)^{p_i -q} v_{i}^{(\alpha_i -1)(p_i -1)-1- \alpha_i( q-1)}  \bigg[ (1-\alpha_i)(p_i-1)  \bigg| \frac{\partial v_i}{\partial x_i}  \bigg|^{p_{i}}  +  \eta_i  v_i^{p_{i}} \bigg]
 \]
 \[
= \sum_{i=1}^{N} \bigg(\epsilon \alpha_i  \prod_{j\ne i} v_j^{\alpha_j}  \bigg)^{p_i -q} v_{i}^{\alpha_i(p_i -q)-p_i}  \bigg[ (1-\alpha_i)(p_i-1)  \bigg| \frac{\partial v_i}{\partial x_i}  \bigg|^{p_{i}}  +  \eta_i  v_i^{p_{i}} \bigg].
 \]
Now we consider  various cases.
\begin{itemize}
\item If $1<q<p_{1}$, then by choosing $ \alpha_i>\frac{p_i}{(p_i -q)}>1$,
letting $\epsilon \rightarrow 0^{+}$ we obtain that $\underline{u}$ is a subsolution provided $\lambda>0$.

\item Assume that some $i_0 \in \{1,..,N\}$ we have $p_{i_0+1}>q\geq  p_{i _0}$ taking $p_{N+1}=\infty$. Then $\underline{u}$ is a subsolution if 
$$\lambda \ge \lambda_{*}:=\max_{U}{\mathcal S}$$
where 
$$
{\mathcal S}=\sum_{i=1}^N \bigg(\epsilon \alpha_i  \prod_{j\ne i} v_j^{\alpha_j}  \bigg)^{p_i -q} v_{i}^{\alpha_i (p_i -q)- p_i}  \bigg[ (1-\alpha_i)(p_i-1)  \bigg| \frac{\partial v_i}{\partial x_i} \bigg|^{p_{i}} + \eta_i  v_i^{p_{i}} \bigg]. 
$$ 
We show that $\lambda_{*}$ is finite. Observe that ${\mathcal S}={\mathcal S}_0+{\mathcal S}_1$ where
$$
{\mathcal S}_0=\sum_{i=1}^{i_0}\bigg(\epsilon \alpha_i  \prod_{j\ne i} v_j^{\alpha_j}  \bigg)^{p_i -q} v_{i}^{\alpha_i (p_i -q)- p_i}  \bigg[ (1-\alpha_i)(p_i-1)  \bigg| \frac{\partial v_i}{\partial x_i} \bigg|^{p_{i}} + \eta_i  v_i^{p_{i}} \bigg]. 
$$ 
and
$$
{\mathcal S}_1=\sum_{i=i_0+1}^N \bigg(\epsilon \alpha_i  \prod_{j\ne i} v_j^{\alpha_j}  \bigg)^{p_i -q} v_{i}^{\alpha_i (p_i -q)- p_i}  \bigg[ (1-\alpha_i)(p_i-1)  \bigg| \frac{\partial v_i}{\partial x_i} \bigg|^{p_{i}} + \eta_i  v_i^{p_{i}} \bigg]. 
$$ 

It is clear that ${\mathcal S}_1$ is finite, taking $\alpha_i>p_i/(p_i-q)$.\newline
On the other hand, observe that the behaviour next to $\partial U$ is controlled: when $v_i \rightarrow 0^+$ then as $\frac{\partial}{\partial n_i} v_i <0 $ on $\partial U_i$ we have that there exists $\delta>0$ small enough such that the quantity
$$ \bigg[ (1-\alpha_i)(p_i-1)  \bigg| \frac{\partial v_i}{\partial x_i} \bigg|^{p_{i}} +  \eta_i  v_i^{p_{i}} \bigg] <0\quad\mbox{in $U_i^\delta$,  for all $i=1,\ldots,p_{i_0}$},
$$
where
$$
U_{i}^\delta:= \{ x_i \in U_i: \dist(x_i \, , \partial U_i) \ge \delta\}.
$$
Moreover, ${\mathcal{S}}_0$ is bounded in $U \cap U_i^\delta$. Then,  ${\mathcal{S}}_0$ is bounded in $U$ and we can conclude that $\lambda_*$ is finite. 
\end{itemize}

\subsection{Supersolutions}
Since $\Omega$ is bounded, we can choose a domain $U$ such that 
$$ 
\Omega \subset U =\prod_{i=1}^{N} U^i, \qquad U^i=(a_i,b_i), \quad a_i,b_i \quad \text{in} \quad  \mathbb{R}.
$$ 
Now for $M>0$ we consider the function
$$
\overline{u}(x):= M \prod_{i=1}^{N} v_i(x_i), \quad \quad x \in \Omega, 
$$
where $v_{i}$ are the first eigenfunctions to the $p_i$-Laplacian in $U^{i}$, whose first eigenvalue we denote by $\eta^i$. Observe that 
$$\overline{u}_{\partial \Omega} >0.
$$
Then, $\overline{u}$ is a supersolution to (\ref{intro}) for all $ 0 \leq \phi \in W^{1,\bf{p}}_0(\Omega)$ holds 
$$
\lambda \int_{\Omega} M^{q-1} \prod_{i=1}^{N} v_{i}^{q-1} \phi \, dx \leq 
\int_{\Omega} \sum_{i=1}^{N}  \bigg| \frac{\partial \overline{u}}{\partial x_i} \bigg|^{p_i- 2}\frac{\partial \overline{u}}{\partial x_i} \, \frac{\partial \phi }{\partial x_i} \, dx.
$$
It is clear that
\[
\int_{\Omega} \sum_{i=1}^{N}  \bigg| \frac{\partial \overline{u}}{\partial x_i}  \bigg|^{p_i -2} \frac{\partial \overline{u}}{\partial x_i} \, \frac{\partial \phi }{\partial x_i} \, dx=\]
\[
\int_{\Omega}\sum_{i=1}^{N}\bigg[M \, \bigg( \prod_{j\ne i} v_j \bigg)  \bigg]^{p_{i}-1} \bigg| \frac{\partial v_i}{\partial x_i} \bigg|^{p_{i}-2} \frac{\partial v_i}{\partial x_i}\, \frac{\partial \phi }{\partial x_i} \, dx=\]
\[
-\sum_{i=1}^{N} \int_{\Omega} \bigg(M  \prod_{j \ne i} v_j^{} \bigg)^{p_i -1} \frac{\partial}{\partial x_i} \bigg(  \bigg| \frac{\partial v_i}{\partial x_i} \bigg|^{p_i -2} \frac{\partial v_i}{\partial x_i} \bigg) \phi \, dx=\]
\[
\sum_{i=1}^{N} \eta^{i}\int_{\Omega} \bigg(M   \prod_{j =1}^{N} v_j \bigg)^{p_i -1} \phi \, dx \]
Thus we may ask for the strong condition
\begin{equation} \label{lambdasup}
\lambda^*:= \sum_{i=1}^{N} \eta^i \bigg(M  \prod_{j =1}^N v_j^{} \bigg)^{p_i -q} \ge \lambda.
\end{equation}
Hence, if $ 1 <q < p_{N}$ by letting $M\rightarrow \infty$ we have that $\overline{u}$ is a super solution $\forall \lambda >0$.

\begin{proof}[Proof of Theorem \ref{main}] \quad 
\begin{enumerate}
\item Assume $1<q<p_1$. Fix $\lambda>0$. Then, we can choose $\epsilon>0$ small and $M$ large enough such that $\underline{u}$, $\overline{u}$ are sub-supersolution of (\ref{intro}) and $\underline{u}\leq \overline{u}$ in $\Omega$. Theorem \ref{teoss} assures the existence of a solution $u$ of (\ref{intro}) such that $\underline{u}\leq u\leq \overline{u}$. This completes this case. 
\item Assume $p_1\leq q <p_N$. In this case, taking for example $\epsilon=1$, we have that $\underline{u}$ is subsolution provided that $\lambda\geq\lambda_*$ for some $\lambda_*$. On the other hand, we can take $M$ large such that $\overline{u}$ is supersolution and $\underline{u}\leq \overline{u}$. Thus, there exists a positive solution for $\lambda \geq \lambda_*$.\newline
Now, we define
$$
\Lambda:= \inf \{\lambda: \mbox{(\ref{intro}) possesses at least a positive solution}\}.
$$
We have proved that $\Lambda<\infty$. For $p_1<q<p_N$, in \cite{ Monte} it was proved that $0<\Lambda$. We show now that this is also true for $q=p_1$. Indeed, let now consider $q=p_1$ and let us multiply the equation \eqref{intro} by $u$ and integrate it on $\Omega$ to obtain
\[
\sum_{i=1}^N \int_{\Omega} \bigg| \frac{\partial u}{\partial x_i} \bigg|^{p_i} dx = \sum_{i=1}^N \bigg| \bigg| \frac{\partial u}{\partial x_i} \bigg| \bigg|^{p_i}_{p_i}= \lambda ||u||_{p_1}^{p_1}= \lambda \int_{\Omega} |u|^{p_1} dx
\] 
Now we use the embedding (\ref{embedding}) on $r=p_1$ to get
\[
\bigg( \frac{d^1 p_1}{2} \bigg)^{-p_1} ||u||_{p_1}^{p_1} \leq \bigg| \bigg| \frac{\partial u}{\partial x_1} \bigg| \bigg|^{p_1}_{p_1}+ \sum_{i=2}^N \bigg| \bigg| \frac{\partial u}{\partial x_i} \bigg| \bigg|^{p_i}_{p_i}= \lambda ||u||_{p_1}^{p_1}
\] and thus
\[
||u||_{p_1}^{p_1} \bigg[ \lambda - \bigg( \frac{2}{d^1 p_1} \bigg)^{p_1} \bigg] \geq 0.
\] 
But if $\lambda< \bigg( \frac{2}{d^1 p_1} \bigg)^{p_1} $ this quantity is negative and we arrive to the absurd of declaring $||u||_{p_1} \ne 0$.\newline
We prove now that for all $\lambda>\Lambda$ we have the existence of positive solution. Indeed, fix $\lambda_0>\Lambda$. Then, by definition of $\Lambda$, there exists $\mu\in(\Lambda,\lambda_0)$ and a positive solution, denoted by $u_\mu$, of (\ref{intro}) for $\lambda=\mu$. Since $\mu<\lambda_0$, it is clear that $u_\mu$ is subsolution of (\ref{intro}) for $\lambda=\lambda_0$. On the other hand, for $M$ large, there exists $\overline{u}$ supersolution of (\ref{intro}) for $\lambda=\lambda_0$. Finally, thanks to regularity results, see for instance Proposition 4.1 in \cite{ElHam} or Lemma 2.4 in \cite{giosusbsup}, we have that 
$u_\mu\in L^\infty(\Omega)$. Hence, for $M$ large $u_\mu\leq \overline{u}$, and we can conclude the existence of positive solution for $\lambda=\lambda_0$. This completes the proof.
\end{enumerate}
\end{proof}

\begin{remark}
Since our subsolution $\underline{u}$ is strictly positive in $U$, we have by Theorem \ref{teoss} that $u \ge \underline{u}>0$ in a non empty open set contained in $\Omega$. In the case $p_1\ge 2$ by the result of Corollary 4.4 \cite{Monte} we have $u>0$ in $\Omega$. 
\end{remark} 
    
    \begin{remark} We comment a possible further generalization.\newline
Let $ x=(x_{1},...,x_N)$, where $x_i \in \Omega_i \subset \mathbb{R}^{N_i}$ , being $\Omega_i$ an open, bounded and convex domain. Denote with $\nabla_{x_i}$ the gradient along the vector $x_i$ and $\text{div}_{x_i}$ its divergence, and let 
$$
\Delta_{p_i} u= {\text{div}}_{x_i} (|\nabla_{x_i} u|^{p_i-2}\nabla_{x_i} u)= \sum_{j=1}^{N_i}\frac{\partial}{\partial x_{ij}}\left( \left|\frac{\partial u}{\partial x_{ij}}\right|^{p_i-2}\frac{\partial u}{\partial x_{ij}} \right)
$$ 
be the $p_i$-Laplacian acting on the $x_i$ vector. Problems of the kind of

\begin{equation}
\label{generalisation}
\left\{\begin{array}{ll}
-\displaystyle \sum_{i=1}^N  \Delta_{p_i} u =\lambda u^{q-1}  &\mbox{in $\Omega= \prod {\Omega_i}$,}\\
u=0 & \mbox{on $\partial\Omega$,}
\end{array}
\right.
\end{equation} \noindent 
can be faced with the same technique, using properties of the $p_i$-Laplacian principal eigenfuctions, and owing integrability condition as \eqref{cosa} to recent regularity results obtained in \cite{Cia}. 
\end{remark}

\section*{Aknowledgments}

The authors wish to thank professors A. Cianchi and V. Vespri for enlighting conversations about the problem. Moreover, the authors acknowledge IMUS (Mathematics Institute of the University of Seville), IEMath-GR (Mathematics Institute of the University of Granada) and University of Cadiz for supporting a PhD course from which the collaboration arose. S. Ciani is partially founded by INdAM group GNAMPA, A. Su\'arez  by PGC2018-098308-B-I00 (MCI/AEI/FEDER, UE) and G. M. Figueiredo by CNPQ, CAPES and FAP-DF.


\end{document}